\newtheorem{thm}{Theorem}[section]
\newtheorem{prop}[thm]{Proposition}
\newtheorem{defn}[thm]{Definition}
\newtheorem{rem}[thm]{Remark}
\theoremstyle{definition}
\numberwithin{equation}{section}
\newcommand{\C}{\mathbb{C}}
\newcommand{\cD}{\mathcal{D}}
\newcommand{\cO}{\mathcal{O}}
\newcommand{\R}{\mathbb{R}}
\newcommand{\supp}{\operatorname{supp}}
\newcommand{\p}{\partial}
\newcommand{\LV}{\left|}
\newcommand{\RV}{\right|}
\begin{document}

\title[]{Partial Data Inverse Problems for Magnetic Schr\"odinger operators on conformally transversally anisotropic manifolds}

\author[Selim]{Salem Selim}

\address
        {Salem Selim, Department of Mathematics\\
         University of California, Irvine\\ 
         CA 92697-3875, USA }

\email{selimsa@uci.edu}

\author[Yan]{Lili Yan}

\address
        {Lili Yan, School of Mathematics\\
         University of Minnesota\\ 
         MN 55455, USA }

\email{lyan@umn.edu}

\maketitle

\begin{abstract} 
We study inverse boundary problems for the magnetic Schr\"odinger operator with H\"older continuous magnetic potentials and continuous electric potentials on a conformally transversally anisotropic Riemannian manifold of dimension $n \geq 3$ with connected boundary. A global uniqueness result is established for magnetic fields and electric potentials from the partial Cauchy data on the boundary of the manifold provided that the geodesic X-ray transform on the transversal manifold is injective.
\end{abstract}


\section{Introduction and statement of results}
Let $(M,g)$ be a  smooth compact oriented Riemannian manifold of dimension $n\ge 3$ with connected smooth boundary $\p M$. 
Let $d: C^\infty(M)\to C^\infty(M, T^*M)$ be the de Rham differential, and let $A\in C^\infty(M, T^*M)$ be a $1$-form with complex-valued $C^\infty$ coefficients. Let us introduce 
\[
d_A=d+i A: C^\infty(M)\to C^\infty(M, T^*M),
\]
and its formal $L^2$ adjoint of $d_A^*:C^\infty(M, T^*M)\to C^\infty(M)$ defined by $d_A^*=d^*-i \langle\overline{A}, \cdot\rangle_g$.

In this paper, we shall be concerned with inverse boundary problems for the magnetic Schr\"odinger operator with H\"older continuous magnetic potential $A\in C^{0,\varepsilon}(M, T^*M)$, $\varepsilon > 0$, and continuous electric potential $q\in C(M,\C)$ defined by
\begin{equation}
\label{eq_1_1}
\begin{aligned}
L_{g,A,q}u&=(d_{\overline{A}}^*d_A+q)u\\
&= -\Delta_g u +i d^*(Au)- i \langle A,du\rangle _g+ (\langle A, A\rangle_g+q)u,\quad u\in H^1(M^{int}),
\end{aligned}
\end{equation}
where $M^{int} = M\setminus\p M$ stands for the interior of $M$.

Let $u\in H^1(M^{int})$ be such that 
\[
L_{g,A,q}u=0 \quad \text{in} \quad \cD'(M^{int}).
\] 
Using a weak formulation, $(\p_\nu u+i\langle A, \nu\rangle u)|_{\p M}$ is well defined in $H^{-1/2}(\p M)$, see \cite[Section 1]{KK_2018}.  
Here and in what follows $\nu$ is the unit outer normal to the boundary of $M$. 

In this paper, our focus is to establish global uniqueness results for the magnetic potential $A$ and the electric potential $q$ from the knowledge of partial Cauchy data defined on a suitable open subset $\Gamma\subseteq \p M$ for solutions of the magnetic Schr\"odinger operator given by
\begin{align*}
    C^\Gamma_{g,A,q}=
    \{(u|_{\p M}, (\p_\nu u+i\langle A, \nu\rangle u)|_{\Gamma}): u\in H^1(M^{int}) \text{ such that } L_{g,A,q}u=0 \text{ in } \cD'(M^{int}) \}.
\end{align*}

A well-known feature of this problem is that there is gauge equivalence: one has
\begin{equation}
\label{eq_gauge}
C_{g,A,q } = C_{g,A+dp,q}
\end{equation}
for $p\in C^{1,\varepsilon}(M)$ such that $p|_{\p M}=0$, see \cite[Lemma 4.1]{KK_2018}.
Here $C_{g,A,q}$ is the full Cauchy data defined as follows:
\begin{align*}
    C_{g,A,q}=
    \{(u|_{\p M}, (\p_\nu u+i\langle A, \nu\rangle u)|_{\p M}): u\in H^1(M^{int}) \text{ such that } L_{g,A,q}u=0 \text{ in } \cD'(M^{int}) \}.
\end{align*}
Thus we may only hope to recover the magnetic field $dA$ and the electric potential $q$.
 
The study of the corresponding full data problem has been fruitful in the setting of $\R^n$ with $n\ge 3$. Following the fundamental works \cite{Sylvester_Uhlmann_1987} for Schr\"odinger operators i.e., $A = 0$, a uniqueness result for magnetic Schr\"odinger operators was obtained by Sun \cite{Sun_1993} for $A\in W^{2,\infty}$ under a smallness condition, and the smallness condition was later removed in \cite{NakSunUlm_1995} for smooth magnetic and electric potentials, and compactly supported $C^2$ magnetic and $L^\infty$ electric potentials. The regularity was extended to $A\in C^1$ in \cite{tolmasky1998exponentially}, to some less regular but small potentials in \cite{panchenko2002inverse}, and to Dini continuous magnetic potentials in \cite{salo2004inverse}. In particular, Krupchyk and Uhlmann \cite{Krup_Uhlmann_magnet} extended the uniqueness result for magnetic and electric potentials that are of class $L^\infty$. In three dimensions, Haberman \cite{haberman2018unique} improved the regularity to magnetic potentials  small in $W^{s,3}$ with $s>0$ and electric potentials in $W^{-1,3}$.

Going beyond the Euclidean setting, inverse boundary problems for magnetic Schr\"odinger operators were only studied in the case when $(M,g)$ is CTA  (conformally transversally anisotropic, see Definition \ref{def_CTA} below) and under the assumption that the geodesic $X$-ray transform on the transversal manifold is injective, see the fundamental works \cite{DKSaloU_2009} and \cite{DKuLS_2016} which initiated this study on CTA manifolds with simple transversal manifold, and on CTA manifolds with injective geodesic $X$-ray transform on the transversal manifold separately, see \cite{DKSalo_2013} for unbounded potentials. 
In the absence of $q$, this problem was studied in \cite{Cekic} for smooth magnetic potentials on CTA manifolds with injective geodesic $X$-ray transform on the transversal manifold. The regularity was improved in  \cite{KK_2018} for bounded magnetic and electric potentials when $(M,g)$ is CTA with a simple transversal manifold, and for a continuous magnetic potential and a bounded electric potential when $(M,g)$ is CTA with injective geodesic $X$-ray transform on the transversal manifold, see also \cite{KK_advection_2018}. We refer to the survey paper 
\cite{uhlmann2014inverse} for additional references for full data problems.

Turning our attention back to the partial data problem.
In the Euclidean setting, in the absence of a magnetic potential, the partial data result for Schr\"odinger operator is proved for $q\in L^\infty$ in \cite{kennig_sjostrand_uhlmann_partial_data} when $\Gamma$ is possibly very small, extended by \cite{DKSU_2007} to magnetic Schr\"odinger operator where both magnetic field $dA$ and the potential $q$ were uniquely determined. The regularity was relaxed to $A$ of  H\"older continuity, $q$ in $L^{\infty}$ in \cite{knudsen2006determining}. See \cite{j_chung_partial_2014,chung_partial_2014} for the case where both Dirichlet and Neumann data are measured on part of the boundary. 
On CTA manifolds with the absence of $A$, the partial data problem was studied for continuous $q$ in \cite{Kenig_Salo_APDE_2013}.
With the absence of $q$, this partial data problem was also studied in \cite{Cekic}.
Recently, a uniqueness result was proved in \cite{selim_partial_2022} for  $A\in W^{1,n}\cap L^\infty$ and $q\in L^n$ on CTA manifolds with a simple transversal manifold with $\Gamma$ being roughly half of the boundary, improving the uniqueness result obtained in \cite{Bhattacharyya_2018} for smooth $A$ and bounded $q$. 
We refer to the survey paper 
\cite{kenig2014recent} for a fuller account of the work done on partial data inverse problems.

To be on par with the best available full data result, one would like to establish a partial data result on CTA manifolds with injective geodesic $X$-ray transform on the transversal manifold. 

\begin{defn}
\label{def_CTA}
A compact Riemannian manifold $(M,g)$ of dimension $n\ge 3$ with boundary $\p M$ is called conformally transversally anisotropic (CTA) if $M\subset\subset \R\times M_0^{int}$  where $g= c(e \oplus g_0)$, $(\R,e)$ is the Euclidean real line, $(M_0,g_0)$ is a smooth compact $(n-1)$-dimensional manifold with smooth boundary, called the transversal manifold, and $c\in C^\infty(\R\times M_0)$ is a positive function. 
\end{defn}

Let us recall some definitions related to the geodesic X-ray transform following \cite{Guillarmou_2017}, \cite{DKSaloU_2009}. The geodesics on $M_0$ can be parametrized by points on the unit sphere bundle $SM_0=\{(x,\xi)\in TM_0: |\xi|=1\}$.  Let 
\[
\p_\pm SM_0=\{ (x,\xi)\in SM_0: x\in \p M_0, \pm \langle\xi,  \nu(x) \rangle>0\}
\] 
be the incoming ($-$) and outgoing ($+$) boundaries of $SM_0$. Here  $\nu$ is the unit outer normal vector field to $\p M_0$. Here and in what follows $\langle \cdot,\cdot\rangle$ is the duality between $T^*M_0$ and $TM_0$.

Let $(x,\xi)\in \p_-SM_0$ and $\gamma=\gamma_{x,\xi}(t)$ be the geodesic on $M_0$ such that $\gamma(0)=x$ and $\dot{\gamma}(0)=\xi$. Let us denote by $\tau(x,\xi)$ the first time when the geodesic  $\gamma$ exits $M_0$ with the convention that $\tau(x,\xi)=+\infty$ if the geodesic does not exit $M_0$. We define the incoming tail by
\[
\Gamma_-=\{(x,\xi)\in \p_-SM_0:\tau(x,\xi)=+\infty\}.
\]
When $f\in C(M_0,\C)$ and $\alpha\in C(M_0,T^*M_0)$ is a complex valued $1$-form, we define the geodesic X-ray transform on $(M_0,g_0)$  as follows:
\[
I(f,\alpha)(x,\xi)=\int_0^{\tau(x,\xi)} \big[ f(\gamma_{x,\xi}(t))+ \langle \alpha(\gamma_{x,\xi}(t)), \dot{\gamma}_{x,\xi}(t) \rangle \big]dt, \quad (x,\xi)\in \p_-SM_0\setminus\Gamma_-.
\]

A unit speed geodesic segment $\gamma=\gamma_{x,\xi}:[0,\tau(x,\xi)]\to M_0$, where $\tau(x,\xi)>0$, is called nontangential if $\gamma(0),\gamma(\tau(x,\xi))\in \p M_0$, $\dot{\gamma}(0),\dot{\gamma}(\tau(x,\xi))$ are nontangential vectors on $\p M_0$, and $\gamma(t)\in M_0^{int}$ for all $0<t<\tau(x,\xi)$.

\textbf{Assumption 1.} \textit{We assume that the geodesic $X$-ray transform on $(M_0,g_0)$ is injective in the sense that  if $I(f,\alpha)(x,\xi)=0$ for all $(x,\xi)\in \p_-SM_0\setminus\Gamma_-$  such that  $\gamma_{x,\xi}$ is a nontangential geodesic, then  $f=0$ and $\alpha=dp$ in $M_0$ for some $p\in C^1(M_0,\C)$ with $p|_{\p M_0}=0$}. 

Let $x=(x_1,x')$ be the local coordinates in $\R\times M_0$. Let $\varphi(x) = x_1$ be a limiting Carleman weight on $M$, see \cite{DKSaloU_2009}. We introduce the back side of $\p M$ as follows:
\begin{equation}\label{eq_backside}
B:=\{x\in \p M: \p_\nu \varphi (x)\ge 0\}.
\end{equation}
Our main result is the following:
\begin{thm}
\label{thm_main}
Let (M,g) be a CTA manifold of dimension $n\ge 3$ with a connected boundary such that Assumption 1 holds for the transversal manifold. Let $A^{(1)}, A^{(2)} \in C^{0,\varepsilon}(M,T^*M)$, $\varepsilon>0$, be complex-valued 1-forms, and $q^{(1)}, q^{(2)}\in C(M, \C)$. Let us assume further that $A^{(1)}|_{\p M}=A^{(2)}|_{\p M}$, $q^{(1)}|_{\p M}=q^{(2)}|_{\p M}$. Let $\Gamma \subset \p M$ be an open neighborhood of $B$. If $C^\Gamma_{g,A^{(1)}, q^{(1)}}=C^\Gamma_{g,A^{(2)}, q^{(2)}}$, then $dA^{(1)}=dA^{(2)}$ and $q^{(1)}=q^{(2)}$ in M.
\end{thm}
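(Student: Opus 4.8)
The plan is to follow the Carleman-weight and complex geometric optics (CGO) strategy developed for CTA manifolds, reducing the problem to the injectivity of the geodesic X-ray transform on $(M_0,g_0)$ supplied by Assumption 1. First I would derive the basic integral identity. Using the hypothesis $C^\Gamma_{g,A^{(1)},q^{(1)}}=C^\Gamma_{g,A^{(2)},q^{(2)}}$, every solution $u_1$ of $L_{g,A^{(1)},q^{(1)}}u_1=0$ can be matched with a solution $u_2$ of the adjoint equation $L_{g,A^{(2)},q^{(2)}}^\ast u_2=0$ having the same Dirichlet data, and Green's formula yields an identity of the form
\[
\int_M \LB i\LA A^{(1)}-A^{(2)}, \overline{u_2}\,du_1-u_1\,d\overline{u_2}\RA_g + \LC \LA A^{(1)},A^{(1)}\RA_g-\LA A^{(2)},A^{(2)}\RA_g+q^{(1)}-q^{(2)}\RC u_1\overline{u_2}\RB\, dV_g = \mathcal{B},
\]
where $\mathcal B$ is a boundary term supported on $\p M\setminus\Gamma$, hence on the subset where $\p_\nu\varphi<0$. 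To control the unmeasured term $\mathcal B$ I would invoke a boundary Carleman estimate for the limiting Carleman weight $\varphi(x)=x_1$, in its convexified form $\varphi_\delta=\varphi+\tfrac{h}{2\delta}\varphi^2$; such an estimate lets the contribution of $\mathcal B$ be absorbed in the limit $h\to 0$ precisely because the Neumann data are measured on a neighborhood $\Gamma$ of the back side $B=\{\p_\nu\varphi\ge 0\}$.

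Next I would construct the CGO solutions in the form $u_j=e^{\mp\frac1h(\varphi+i\psi)}(a_j+r_j)$, where $\psi$ solves the eikonal equation associated with the transversal geometry, $a_j$ is a Gaussian beam amplitude concentrating along a fixed nontangential geodesic $\gamma$ on $M_0$, and $r_j$ is an $L^2$-small remainder produced by the Carleman estimate. The transport equation for $a_j$ carries an attenuation factor built from the line integral of $A^{(1)}-A^{(2)}$ along $\gamma$. Since $A^{(j)}\in C^{0,\varepsilon}$ is only Hölder continuous, I would first replace it by a mollification $A^{(j)}_\tau=A^{(j)}*\chi_\tau$ at scale $\tau=\tau(h)\to 0$, use the bound $\|A^{(j)}-A^{(j)}_\tau\|_{L^\infty}=O(\tau^\varepsilon)$, and tune $\tau$ as a suitable power of $h$ so that the mollification error, the remainder, and $\mathcal B$ all decay; this is exactly where $\varepsilon>0$ enters.

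Inserting these solutions into the identity and letting $h\to 0$, the leading-order concentration along $\gamma$ survives: the $x_1$-integration produces the Fourier transform in the Euclidean variable $x_1\mapsto\lambda$, and the transversal integral becomes the geodesic X-ray transform $I$ on $(M_0,g_0)$. A first choice of amplitudes isolating the term linear in $A^{(1)}-A^{(2)}$ would show that $I$ applied to the Fourier-transformed transversal form vanishes for every nontangential geodesic and every $\lambda$; Assumption 1 then forces this form to be exact, and inverting the Fourier transform gives $dA^{(1)}=dA^{(2)}$. Having recovered the magnetic field, I would use the gauge equivalence \eqref{eq_gauge} to write $A^{(2)}=A^{(1)}+dp$ with $p|_{\p M}=0$, reduce to the case $A^{(1)}=A^{(2)}$, and a second pass through the identity with the simplified amplitudes would yield $I(\,\widehat{q^{(1)}-q^{(2)}}(\lambda,\cdot),0)=0$, whence $q^{(1)}=q^{(2)}$ by Assumption 1.

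The main obstacle, I expect, is reconciling the low $C^{0,\varepsilon}$ regularity with the partial-data Carleman framework at the same time: the Gaussian beam amplitudes must be built to just enough accuracy that the mollification errors, the remainder estimates, and the unmeasured boundary term $\mathcal B$ all vanish as $h\to 0$, which demands a careful quantitative balancing of the smoothing scale $\tau(h)$, the beam width, and the convexification parameter $\delta$ against the Hölder exponent $\varepsilon$. A secondary difficulty is that only injectivity, not simplicity, of the transversal X-ray transform is assumed, so one cannot use global geometric optics and must rely on the less explicit Gaussian beam quasimodes, making the extraction of $I$ in the limit, and the separation of the linear-in-$A$ term from the quadratic and potential terms, correspondingly more delicate.
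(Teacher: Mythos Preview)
Your overall architecture---integral identity with a boundary remainder supported on $\partial M\setminus\Gamma$, boundary Carleman estimate with the weight $\varphi=x_1$ to kill that remainder, Gaussian-beam CGO solutions concentrating along nontangential transversal geodesics, reduction to the attenuated geodesic X-ray transform, and a second pass after gauging away $dA$ to recover $q$---matches the paper. The gap is in where and how the H\"older exponent $\varepsilon>0$ actually enters.

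You locate the regularity obstacle in the CGO amplitude construction and propose to mollify $A^{(j)}$ at scale $\tau(h)$ and balance $\tau$ against $h$. But the Gaussian-beam quasimodes imported from \cite{KK_2018} already handle $A\in C^{0,\varepsilon}$, with remainders satisfying only $\|r_j\|_{H^1_{scl}}=o(1)$; there are no extra powers of $h$ available to trade against mollification errors. The genuine bottleneck is the boundary Carleman estimate, which in the form used here requires the auxiliary function $m_2-u_1$ to lie in $H^2(M^{\mathrm{int}})\cap H^1_0(M^{\mathrm{int}})$. One has
\[
L_{g,A^{(2)},q^{(2)}}(m_2-u_1)=\bigl(L_{g,A^{(1)},q^{(1)}}-L_{g,A^{(2)},q^{(2)}}\bigr)u_1,
\]
and expanding the difference produces the term $i\,\bigl(d^*(A^{(1)}-A^{(2)})\bigr)u_1$. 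For $A^{(j)}\in C^{0,\varepsilon}$ this coefficient is only a distribution, so in general $\Delta_g(m_2-u_1)\notin L^2$, $m_2-u_1\notin H^2$, and the boundary Carleman step cannot even be formulated. Mollifying $A$ inside the CGO amplitude does nothing for this, since $m_2$ and $u_1$ solve the equations with the true, unmollified coefficients.

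The paper's device is different and is precisely where $\varepsilon>0$ is spent: before anything else one solves the Dirichlet problem $\Delta_g p=d^*A^{(j)}$, $p|_{\partial M}=0$, which by Schauder theory admits $p\in C^{1,\varepsilon}$, and then uses the gauge invariance \eqref{eq_gauge} to reduce to the case $d^*A^{(1)}=d^*A^{(2)}=0$. After this reduction the offending term vanishes identically, so $\Delta_g(m_2-u_1)\in L^2$, elliptic regularity gives $m_2-u_1\in H^2$, and the boundary Carleman estimate yields $h\mathcal B=o(1)$ with no scale-balancing whatsoever. In short, your proposed mollification targets the wrong step; without the divergence-free gauge reduction (or an $H^{-1}$-type boundary Carleman estimate that you do not invoke), the control of $\mathcal B$ does not go through.
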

\begin{rem}
Theorem \ref{thm_main} can be viewed as an extension of \cite{KK_2018} from the full data case to the partial data case. Furthermore, Theorem \ref{thm_main} can be viewed as an improvement on \cite{KK_2018} in the sense that in \cite{KK_2018} only the magnetic field was recovered, while in our Theorem \ref{thm_main} both the magnetic field and the electric potential are recovered. From the perspective of geometric setting, Theorem \ref{thm_main} removes the simplicity assumptions on transversal manifolds in \cite{selim_partial_2022} and extends the unique determination of the magnetic field and potential to a larger class of CTA manifolds. 
\end{rem}

Let us proceed to discuss the main ideas in the proof of Theorem \ref{thm_main}. The main ingredients used to obtain the global uniqueness result are complex geometric optics (CGO) solutions for the magnetic Schr\"odinger operator constructed in \cite{KK_2018} based on Gaussian beam quasimodes, boundary Carleman estimates that controls the inaccessible part due to partial data, and an integral identity derived from \cite{selim_partial_2022}. 
Compared to \cite{selim_partial_2022}, the remainder terms in our CGO solutions decay slower as the semiclassical parameter approaches $0$. 
However, under the condition that $A^{(j)}, \in C^{0,\varepsilon}(M,T^*M)$, $\varepsilon>0$, $j = 1,2$, following the idea used by \cite{knudsen2006determining}, we may reduce the problem to the case when $d^*A^{(j)} = 0$, $j = 1,2$ with the help of Proposition \ref{prop_2_1}, see \cite[Lemma 2.2]{knudsen2006determining}. Therefore, the inaccessible part is still under control using the boundary Carleman estimates. 
\section{Proof of Theorem \ref{thm_main}}
\label{section_2}
Let $(M,g)$ be a CTA manifold so that $(M,g)\subset (\R\times M_0^{int}, c(e\oplus g_0))$, and let $A^{(1)}, A^{(2)} \in C^{0,\varepsilon}(M,T^*M)$, $\varepsilon > 0$, $q^{(1)}, q^{(2)}\in C(M, \C)$. We can assume that $d^*A^{(1)} = d^*A^{(2)}=0$ with the help of gauge equivalence \eqref{eq_gauge} and the following proposition. 
\begin{prop}
\label{prop_2_1}
If $A\in C^\varepsilon(M,T^*M), \varepsilon>0$, then there exists $p\in C^{1,\varepsilon}(M,\C)$ such that $d^*(A+dp)=0$ and $p|_{\p M}=0$.
\end{prop}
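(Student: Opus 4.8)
The plan is to recast the equation $d^*(A+dp)=0$ as a Dirichlet problem for a divergence-form elliptic equation with smooth leading coefficients, solve it variationally, and then upgrade the regularity of the solution to $C^{1,\varepsilon}$ by exploiting the divergence structure of the source.

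First I would identify $d^*$ concretely. Writing $A^\sharp$ for the vector field metrically dual to the $1$-form $A$, one has $d^*A=-\operatorname{div}_g(A^\sharp)$ and $d^*dp=-\Delta_g p=-\operatorname{div}_g(\nabla_g p)$, so the condition $d^*(A+dp)=0$ together with $p|_{\p M}=0$ is equivalent to
\[
\operatorname{div}_g\big(\nabla_g p + A^\sharp\big)=0 \quad\text{in } M^{int}, \qquad p|_{\p M}=0.
\]
This is a divergence-form equation $\operatorname{div}_g(\nabla_g p)=-\operatorname{div}_g(A^\sharp)$ whose second-order coefficients come from the smooth metric $g$, and whose right-hand side is the divergence of the Hölder continuous field $A^\sharp\in C^{0,\varepsilon}$.

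Next I would produce a weak solution. In the variational formulation one seeks $p\in H_0^1(M^{int})$ such that
\[
\int_M \langle dp, d\phi\rangle_g\, dV_g = -\int_M \langle A, d\phi\rangle_g\, dV_g \quad\text{for all } \phi\in H_0^1(M^{int}).
\]
Since $A\in C^{0,\varepsilon}(M,T^*M)\subset L^\infty(M,T^*M)$, the right-hand side is a bounded linear functional on $H_0^1(M^{int})$, while the Dirichlet form on the left is coercive by the Poincaré inequality (recall $\p M\neq\emptyset$). The Lax–Milgram theorem then yields a unique $p\in H_0^1(M^{int})$ solving the equation weakly, and in particular $p|_{\p M}=0$ in the trace sense.

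Finally, and this is the main obstacle, I would upgrade the regularity to $p\in C^{1,\varepsilon}(M)$. The delicate point is that the source $d^*A$ is only a distribution of order $-1$, so the Schauder theory for equations with a $C^{0,\varepsilon}$ right-hand side—which would give a $C^{2,\varepsilon}$ solution—does not apply directly. Instead one must use the sharp Schauder estimate for divergence-form elliptic operators: if $\operatorname{div}(a\nabla p)=\operatorname{div}(F)$ with $a$ smooth and $F\in C^{0,\varepsilon}$, then $\nabla p\in C^{0,\varepsilon}$, that is, $p\in C^{1,\varepsilon}$. Applying the interior version of this estimate in local coordinates, and its global counterpart in boundary normal coordinates near $\p M$ (using that $\p M$ is smooth and the Dirichlet data vanish), and then patching over a finite coordinate cover of $M$, gives $p\in C^{1,\varepsilon}(M)$, as claimed. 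This is the manifold analogue of \cite[Lemma 2.2]{knudsen2006determining}, to which one may alternatively appeal after localizing.
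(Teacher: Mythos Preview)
Your proposal is correct and follows essentially the same approach as the paper: both reduce to solving the Dirichlet problem $\Delta_g p = d^*A$, $p|_{\p M}=0$, and obtain $C^{1,\varepsilon}$ regularity from the divergence-form Schauder theory. The paper simply cites \cite[Theorem 8.34]{gilbarg1977elliptic} for this last step, whereas you spell out the variational existence and the localization needed to carry the Euclidean estimate over to the manifold setting.
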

\begin{proof}
It suffices to choose $p$ such that $\Delta_g p=d^*A$ and $p|_{\p M}=0$, and this Dirichlet problem has a $C^{1,\varepsilon}$ solution by \cite[Theorem 8.34]{gilbarg1977elliptic}.
\end{proof}

Our starting point is the following integral identity from \cite[Proposition 4.4]{selim_partial_2022} which follows as a consequence of the equality $C^\Gamma_{g,A^{(1)}, q^{(1)}}=C^\Gamma_{g,A^{(2)}, q^{(2)}}$. By inspecting the proof of \cite[Proposition 4.4]{selim_partial_2022}, we get same integral identity for our regularity.
\begin{prop}
\label{prop_integral_identity}
Let $A^{(1)}, A^{(2)} \in C(M,T^*M)$, $d^*A^{(1)} = d^*A^{(2)}=0$, and $q^{(1)}, q^{(2)}\in C(M,\C)$. Assume that 
$C^\Gamma_{g,A^{(1)},q^{(1)}}=C^\Gamma_{g,A^{(2)},q^{(2)}}$. 
Then we have
\begin{equation}
\label{eq_integral_identity_1}
\begin{aligned}
&\int_M i\left\langle A^{(1)}-A^{(2)}, u_1d\overline{u_2}-\overline{u_2}du_1\right\rangle_g dV_g
+ \int_M\left(\langle A^{(1)}, A^{(1)}\rangle_g-\langle A^{(2)}, A^{(2)}\rangle_g +q^{(1)}-q^{(2)}\right)u_1\overline{u_2}dV_g\\
&=-\int_{\p M \setminus \Gamma}\p_\nu(m_2-u_1) \overline{u_2}dS_g+i\int_{\p M\setminus \Gamma}\left\langle A^{(1)}-A^{(2)}, \nu \right\rangle_gu_1\overline{u_2}dS_g,
\end{aligned}
\end{equation}
for $u_1, u_2\in H^1(M^{int})$ satisfying 
\begin{equation}
\label{eq_integral_identity_2}
L_{g, A^{(1)},q^{(1)}}u_1=0, \quad L_{g,\overline{A^{(2)}},\overline{q^{(2)}}}u_2=0 \quad \text{in} \quad \cD'(M^{int}),
\end{equation}
and $m_2\in H^1(M^{int})$ satisfying
\begin{equation}
\label{eq_integral_identity_3}
L_{g, A^{(2)},q^{(2)}}m_2=0 \quad \text{in} \quad \cD'(M^{int}), 
\end{equation}
such that 
\begin{equation}
\label{eq_integral_identity_4}
m_2|_{\p M}= u_1|_{\p M}, \quad \left(\p_\nu m_2+i\langle A^{(2)}, \nu \rangle_g m_2\right)|_\Gamma = \left(\p_\nu u_1+i\langle A^{(1)}, \nu \rangle_gu_1\right)|_\Gamma.
\end{equation}
\end{prop}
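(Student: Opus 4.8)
The plan is to obtain \eqref{eq_integral_identity_1} by testing the difference of the two equations against $u_2$ and integrating by parts, reading every boundary term as an $H^{-1/2}(\p M)$--$H^{1/2}(\p M)$ duality as in \cite[Section 1]{KK_2018}. Set $B := A^{(1)} - A^{(2)}$. From \eqref{eq_1_1},
\[
(L_{g,A^{(1)},q^{(1)}} - L_{g,A^{(2)},q^{(2)}})u_1 = i\,d^*(Bu_1) - i\langle B, du_1\rangle_g + (\langle A^{(1)},A^{(1)}\rangle_g - \langle A^{(2)},A^{(2)}\rangle_g + q^{(1)}-q^{(2)})u_1.
\]
First I would pair this with $\overline{u_2}$ and integrate over $M$. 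The divergence term $i\,d^*(Bu_1)$ is integrated by parts, transferring the derivative onto $\overline{u_2}$; this produces the antisymmetric volume term $i\langle B,\, u_1\,d\overline{u_2} - \overline{u_2}\,du_1\rangle_g$ together with a boundary contribution $-i\int_{\p M}\langle B,\nu\rangle_g u_1\overline{u_2}\,dS_g$, so that, combined with the zeroth-order terms, the paired difference equals the left-hand side of \eqref{eq_integral_identity_1} minus that boundary contribution. Since $L_{g,A^{(1)},q^{(1)}}u_1=0$, this paired difference also equals $-\int_M (L_{g,A^{(2)},q^{(2)}}u_1)\overline{u_2}\,dV_g$, and therefore the left side of \eqref{eq_integral_identity_1} becomes
\[
-\int_M (L_{g,A^{(2)},q^{(2)}}u_1)\overline{u_2}\,dV_g + i\int_{\p M}\langle B,\nu\rangle_g u_1\overline{u_2}\,dS_g.
\]

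To handle the first term I would insert $m_2$: since $L_{g,A^{(2)},q^{(2)}}m_2=0$ one has $\int_M (L_{g,A^{(2)},q^{(2)}}u_1)\overline{u_2}\,dV_g = \int_M (L_{g,A^{(2)},q^{(2)}}(u_1-m_2))\overline{u_2}\,dV_g$, and I would apply the Green identity for the magnetic Schr\"odinger operator,
\[
\int_M (L_{g,A,q}w)\overline{v}\,dV_g - \int_M w\,\overline{(L_{g,\overline{A},\overline{q}}v)}\,dV_g = -\int_{\p M}\Big[(\p_\nu w + i\langle A,\nu\rangle_g w)\overline{v} - w\,\overline{(\p_\nu v + i\langle\overline{A},\nu\rangle_g v)}\Big]\,dS_g,
\]
with $A=A^{(2)}$, $q=q^{(2)}$, $w = u_1-m_2$, and $v = u_2$. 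The equation $L_{g,\overline{A^{(2)}},\overline{q^{(2)}}}u_2=0$ removes the second volume integral, and $w|_{\p M}=(u_1-m_2)|_{\p M}=0$ from \eqref{eq_integral_identity_4} annihilates the boundary term carrying the factor $w$ while reducing the magnetic conormal derivative of $w$ to $\p_\nu w$. Hence $\int_M (L_{g,A^{(2)},q^{(2)}}u_1)\overline{u_2}\,dV_g = \int_{\p M}\p_\nu(m_2-u_1)\overline{u_2}\,dS_g$.

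Combining the two steps, the left side of \eqref{eq_integral_identity_1} equals $-\int_{\p M}\p_\nu(m_2-u_1)\overline{u_2}\,dS_g + i\int_{\p M}\langle B,\nu\rangle_g u_1\overline{u_2}\,dS_g$. I would then split each boundary integral over $\Gamma$ and $\p M\setminus\Gamma$. On $\Gamma$, the Neumann matching in \eqref{eq_integral_identity_4} together with $m_2=u_1$ on $\p M$ gives $\p_\nu(m_2-u_1)=i\langle B,\nu\rangle_g u_1$, so the two integrals over $\Gamma$ cancel exactly. What survives are precisely the two integrals over $\p M\setminus\Gamma$ on the right-hand side of \eqref{eq_integral_identity_1}, which completes the derivation.

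I expect the only genuine difficulty to be the regularity. With $A^{(j)}$ merely continuous and $u_1,u_2,m_2\in H^1(M^{int})$, the conormal derivatives are not classical functions but elements of $H^{-1/2}(\p M)$, so each integration by parts and every boundary integral above must be read as a dual pairing and justified through the weak formulation of \cite[Section 1]{KK_2018}, exactly as in the proof of \cite[Proposition 4.4]{selim_partial_2022}. The key point is that continuity of $A^{(j)}$ already suffices: $\langle A^{(j)},\nu\rangle_g$ is then a bounded multiplier on the relevant trace spaces and all volume integrands remain integrable, so none of the stronger $W^{1,n}\cap L^\infty$ structure of \cite{selim_partial_2022} is needed, and the standing normalization $d^*A^{(j)}=0$ is carried along but does not enter this particular identity. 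Consequently the argument of \cite[Proposition 4.4]{selim_partial_2022} transfers essentially verbatim.
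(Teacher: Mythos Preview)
Your proposal is correct and follows exactly the approach the paper has in mind: the paper does not give a self-contained argument but simply cites \cite[Proposition 4.4]{selim_partial_2022} and remarks that its proof goes through at the present regularity, and your sketch is precisely that proof written out (difference of operators, integration by parts, Green's identity for $L_{g,A^{(2)},q^{(2)}}$ applied to $u_1-m_2$, then cancellation on $\Gamma$ via \eqref{eq_integral_identity_4}). Your final paragraph identifying that only continuity of $A^{(j)}$ is needed to make the weak-formulation pairings and boundary multipliers work is exactly the observation the paper is invoking when it says ``by inspecting the proof \ldots\ we get the same integral identity for our regularity.''
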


We shall also need the following complex geometric optics solutions based on Gaussian beam quasimodes for the semiclassical magnetic Schr\"odinger operator conjugated by a limiting Carleman weight constructed in \cite[Proposition 5.2 and Proposition 6.1]{KK_2018}.

Let $\gamma: [0,L]\to M_0$ be a unit speed non-tangential geodesic on $M_0$, and let $s=\frac{1}{h}+i\lambda$ with $\lambda\in\R$ being fixed. For all $h>0$ small enough, there exist $u_1,u_2\in H^1(M^{int})$ such that $L_{g,A^{(1)}, q^{(1)}}u_1=0$, $L_{g,\overline{A^{(2)}}, \overline{q^{(2)}}}u_2=0$
in $\cD'(M^{int})$ having the form 
\begin{equation}
\label{eq_CGO_1}
u_1= e^{-sx_1} c^{-\frac{n-2}{4}}(v_s+r_1),
\quad
u_2= e^{sx_1} c^{-\frac{n-2}{4}}(w_s+r_2),
\end{equation}
where $v_s, w_s\in C^\infty(M)$ are the Gaussian beam quasimodes such that 
\begin{equation}
\label{eq_CGO_2}
\begin{aligned}
&\|v_s\|_{H^1_{scl}(M^{int})} = \cO(1), \quad \|e^{sx_1}h^2L_{e\oplus g_0, A^{(1)}, q^{(1)}}e^{-sx_1}v_s\|_{H^{-1}_{scl}(M^{int})}=o(h),
\\
&\|w_s\|_{H^1_{scl}(M^{int})} = \cO(1), \quad \|e^{-sx_1}h^2L_{e\oplus g_0, \overline{A^{(2)}}, \overline{q^{(2)}}}e^{sx_1}w_s\|_{H^{-1}_{scl}(M^{int})}=o(h),
\end{aligned}
\end{equation}
and $r_j\in H^1(M^{int})$ are such that $\|r_j\|_{H^1_{scl}(M^{int})}=o(1)$ as $h\to 0$, $j = 1,2$.

Furthermore, for each $\psi\in C(M_0)$ and $x'_1\in \R$, we have
\begin{equation}
\label{eq_CGO_5}
\lim_{h\to 0} \int_{\{x'_1\}\times M_0} v_s \overline{w_s} \psi dV_{g_0}= \int_0^L e^{-2\lambda t} \eta(x_1,t)e^{\Phi^{(1)}(x'_1,t)+\overline{\Phi^{(2)} (x'_1,t)}} \psi(\gamma(t)) dt.
\end{equation}
 Here $\Phi^{(1)}, \Phi^{(2)}\in C(\R\times [0,L])$ satisfy the following transport equations,
\[
(\p_{x_1}-i\p_t) \Phi^{(1)}=-i A^{(1)}_1(x_1,\gamma(t))- A^{(1)}_t(x_1,\gamma(t)),
\]
\[
 (\p_{x_1}+i\p_t) \Phi^{(2)}=-i \overline{A^{(2)}_1(x_1,\gamma(t))}+\overline{A^{(2)}_t (x_1,\gamma(t))},
\]
where 
\[
A^{(j)}_t (x_1,\gamma(t))=\langle A^{(j)}(x_1,\gamma(t)), (0,\dot{\gamma}(t))\rangle, \quad j=1,2,
\]
with $\langle \cdot, \cdot \rangle$ being the duality between tangent and cotangent vectors, and $\eta\in C^\infty(\R\times [0,L])$ is such that $(\p_{x_1}-i\p_t) \eta=0$. 

Next, we shall test the integral identity \eqref{eq_integral_identity_1} against complex geometric optics solutions \eqref{eq_CGO_1}, multiply by $h$, and pass to the limit $h\to 0$. To that end, the following estimate for the right-hand side of \eqref{eq_integral_identity_1} is needed. 

\begin{prop}
\label{prop_rhsfirst}
Let $u_1,u_2,m_2$ be functions as described above.
Then we have 
\begin{equation}\label{eq_rhsfirst}
-h\int_{\p M \setminus \Gamma}\p_\nu(m_2-u_1) \overline{u_2}dS_g=o(1), \quad h\to 0.
\end{equation}
\end{prop}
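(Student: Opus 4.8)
The plan is to set $v:=m_2-u_1$ and estimate the boundary integral by means of a boundary Carleman estimate, using that the inaccessible part $\p M\setminus\Gamma$ is contained in $\{\p_\nu\varphi<0\}$ while the matching of Cauchy data annihilates the contribution over $B=\{\p_\nu\varphi\ge0\}\subseteq\Gamma$.

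First I would record the equation and boundary behaviour of $v$. From $m_2|_{\p M}=u_1|_{\p M}$ we get $v|_{\p M}=0$, and subtracting the equations in \eqref{eq_integral_identity_2} and \eqref{eq_integral_identity_3} gives $L_{g,A^{(2)},q^{(2)}}v=f$ in $\cD'(M^{int})$, where, after the reduction $d^*A^{(j)}=0$ (so that $L_{g,A,q}u=-\Delta_g u-2i\langle A,du\rangle_g+(\langle A,A\rangle_g+q)u$ carries no distributional term),
\[
f=2i\langle A^{(2)}-A^{(1)},du_1\rangle_g-\big(\langle A^{(2)},A^{(2)}\rangle_g-\langle A^{(1)},A^{(1)}\rangle_g+q^{(2)}-q^{(1)}\big)u_1\in L^2(M^{int}).
\]
Crucially, on $\Gamma$ the matching \eqref{eq_integral_identity_4} together with $A^{(1)}|_{\p M}=A^{(2)}|_{\p M}$ and $v|_{\p M}=0$ forces $\p_\nu v|_\Gamma=0$. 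I then conjugate by the Carleman weight, writing $\tilde v=e^{\varphi/h}v$ and $\tilde u_2=e^{-\varphi/h}u_2=c^{-\frac{n-2}{4}}e^{i\lambda x_1}(w_s+r_2)$; since $v|_{\p M}=0$ one has $\p_\nu v=e^{-\varphi/h}\p_\nu\tilde v$ on $\p M$, hence
\[
-h\int_{\p M\setminus\Gamma}\p_\nu v\,\overline{u_2}\,dS_g=-h\int_{\p M\setminus\Gamma}\p_\nu\tilde v\,\overline{\tilde u_2}\,dS_g,
\]
so it suffices to show the latter is $o(1)$.

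The main tool is the boundary Carleman estimate for $L_{g,A^{(2)},q^{(2)}}$ applied to $\tilde v$ (valid since $\tilde v|_{\p M}=0$), which controls the inaccessible boundary term over $\p M_-=\{\p_\nu\varphi<0\}$ by the interior source; schematically it yields $\int_{\p M_-}|\p_\nu\varphi|\,|\p_\nu\tilde v|^2\,dS_g\le C h\,\|e^{\varphi/h}f\|_{L^2}^2$, the favourable boundary term over $\p M_+=B\subseteq\Gamma$ being absent because $\p_\nu\tilde v|_\Gamma=0$ --- this is exactly where the partial-data hypothesis enters. Differentiating the factor $e^{-\varphi/h}$ in $u_1=e^{-sx_1}c^{-\frac{n-2}{4}}(v_s+r_1)$ produces one power of $h^{-1}$ while $v_s,r_1=\cO(1)$ in $H^1_{scl}$, so $\|e^{\varphi/h}f\|_{L^2}=\cO(h^{-1})$; since $|\p_\nu\varphi|\ge c>0$ on the compact set $\p M\setminus\Gamma$, this gives the first Cauchy--Schwarz factor $\big(\int_{\p M\setminus\Gamma}|\p_\nu\tilde v|^2\,dS_g\big)^{1/2}=\cO(h^{-1/2})$.

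It remains to bound the trace of $\tilde u_2$. I would split it into its Gaussian-beam part and its remainder: by concentration of $w_s$ along the transversal image of $\R\times\gamma$ one gets $\|w_s\|_{L^2(\p M)}=\cO(1)$, whereas the semiclassical trace inequality $\|\cdot\|_{L^2(\p M)}\le Ch^{-1/2}\|\cdot\|_{H^1_{scl}(M^{int})}$ gives $\|r_2\|_{L^2(\p M)}=o(h^{-1/2})$. By Cauchy--Schwarz the beam part contributes $h\cdot\cO(h^{-1/2})\cdot\cO(1)=\cO(h^{1/2})$ and the remainder $h\cdot\cO(h^{-1/2})\cdot o(h^{-1/2})=o(1)$, so the whole expression is $o(1)$. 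The main obstacle is precisely this power bookkeeping: with the crude trace bound $\|w_s\|_{L^2(\p M)}=\cO(h^{-1/2})$ the estimate would only close at $\cO(1)$, so it is essential to use the sharper $\cO(1)$ beam trace together with the genuine smallness $\|r_2\|_{H^1_{scl}}=o(1)$; a secondary technical point, handled by approximation, is justifying the boundary Carleman estimate and the normal trace $\p_\nu\tilde v$ for $v$ of the limited regularity coming from potentials that are only $C^{0,\varepsilon}$ and continuous.
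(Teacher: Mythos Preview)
Your approach is essentially the paper's: apply the boundary Carleman estimate to $v=m_2-u_1$, kill the $\p M_+$ boundary term via $\p_\nu v|_\Gamma=0$, bound $\|e^{\varphi/h}f\|_{L^2}=\cO(h^{-1})$, and close with Cauchy--Schwarz using the beam trace $\cO(1)$ and the remainder trace $o(h^{-1/2})$. Two small points where the paper is more careful: (i) it introduces $\widetilde B=\{\p_\nu\varphi\ge -\varepsilon_0\}\subset\Gamma$ and only asserts $\|w_s\|_{L^2(\p M\setminus\widetilde B)}=\cO(1)$, obtained by the projection argument of \cite{Cekic} (which uses $|\p_\nu x_1|\ge\varepsilon_0$ there), rather than your stronger claim $\|w_s\|_{L^2(\p M)}=\cO(1)$---this does not matter since you only integrate over $\p M\setminus\Gamma$ anyway; (ii) the regularity needed for the Carleman estimate is handled directly by observing that $d^*A^{(j)}=0$ forces $\Delta_g v\in L^2$, whence $v\in H^2(M^{int})\cap H^1_0(M^{int})$ by elliptic boundary regularity, rather than by approximation.
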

\begin{proof}
Let us first recall that $\Gamma$ is an open neighborhood of $B$, given by \eqref{eq_backside}, we see that there exists $\varepsilon_0>0$ such that 
\[
B \subset  \widetilde{B}  \coloneqq \{ x\in \p M: \p_\nu\varphi(x)\ge -\varepsilon_0 \} \subset \Gamma.
\] 

By the CGO solution \eqref{eq_CGO_1} and the Cauchy-Schwartz inequality, we get
\begin{equation}\label{eq_rhsfirst_1}
\begin{aligned}
&\LV\int_{\p M \setminus \Gamma}\p_\nu(m_2-u_1) \overline{u_2}dS_g\RV
\\
&\le \frac{1}{\sqrt{\varepsilon_0}} \int_{\p M \setminus \widetilde B} \LV\sqrt{-\p_\nu \varphi} \p_\nu(m_2-u_1) \overline{u_2}\RV dS_g.
\\
&\le \frac{1}{\sqrt\varepsilon_0} \int_{\p M\setminus \widetilde B} \LV \sqrt{-\p_\nu \varphi} \p_\nu(m_2-u_1) e^{\overline{s}x_1}c^{-\frac{n-2}{4}}(\overline{w_s}+\overline{r_2}) \RV dS_g\\
&\le \cO(1) \| \sqrt{-\p_\nu \varphi} e^\frac{\varphi}{h}\p_\nu(m_2-u_1) \|_{L^2(\p M_-)} \left(\|w_s\|_{L^2(\p M \setminus \widetilde B)}+\|r_2\|_{L^2(\p M \setminus \widetilde B)}\right).
\end{aligned}
\end{equation}

To bound the first term in the last inequality in \eqref{eq_rhsfirst_1}, we shall recall the following boundary Carleman estimate for $L_{g,A,q}$ in \cite[Corollary 2.1]{selim_partial_2022}, and we note that, by inspecting the proof of \cite[Corollary 2.1]{selim_partial_2022}, the estimate is valid when $A\in C^{0,\varepsilon}(M, T^*M)$, $d^*A=0$, and $q\in C(M,\C)$. For $u\in H^2(M^{int})\cap H^1_0(M^{int})$ and $0<h\ll 1$, we have 
\begin{equation}\label{eq_boundary_carleman_est}
\begin{aligned}
h^{\frac{1}{2}}\|\sqrt{-\p_\nu\varphi} &e^{\varphi/h} \p_\nu u\|_{L^2(\p M_-)} + \|e^{\varphi/h}u\|_{H^1_{scl}(M^{int})}
\\
&\le \cO(h)\|e^{\varphi/h}L_{g,A,q}u\|^2_{L^2(M)}+\cO(h^{\frac{1}{2}})\|\sqrt{\p_\nu\varphi} e^{\varphi/h} \p_\nu u\|_{L^2(\p M_+)}.
\end{aligned}
\end{equation}
Here $\p M_\pm := \{x\in \p M: \pm\p_\nu\varphi(x)\ge 0 \}$
denote the front ($\p M_{-}$) and back ($\p M_{+}$) face of $\p M$, where $\varphi(x) = x_1$.

It follows from \eqref{eq_integral_identity_2}, \eqref{eq_integral_identity_3},  \eqref{eq_integral_identity_4}, and $d^*A^{(1)} = d^*A^{(2)}=0$ that $m_2 - u_1\in H^1_0(M^{int})$, $\Delta_g (m_2-u_1)\in L^2(M)$. Therefore, by the boundary elliptic regularity, we have $m_2 - u_1\in H^2(M^{int})$. 
Now apply the boundary Carleman estimate \eqref{eq_boundary_carleman_est} to 
$u=m_2-u_1$, $A=A^{(2)}$, $q=q^{(2)}$, we obtain   
\begin{equation}\label{eq_rhsfirst_3}
\begin{aligned}
&\| \sqrt{-\p_\nu \varphi} e^\frac{\varphi}{h}\p_\nu(m_2-u_1) \|_{L^2(\p M_-)}\\
&\le   \cO( \sqrt h) \|e^{\frac{\varphi}{h}} L_{g,A^{(2)},q^{(2)}} (m_2-u_1)\|_{L^2(M)} + \cO(1) \|\sqrt{\p_\nu \varphi} e^\frac{\varphi}{h} \p_\nu (m_2-u_1) \|_{L^2(\p M_+)}.
\end{aligned}
\end{equation}
Here the second summand vanishes by \eqref{eq_integral_identity_4} and $A^{(1)}|_{\Gamma}=A^{(2)}|_{\Gamma}$. In view of \eqref{eq_integral_identity_2}, \eqref{eq_integral_identity_3} and \eqref{eq_1_1}, we write 
\begin{equation}\label{eq_rhsfirst_4}
\begin{aligned}
e^{\frac{\varphi}{h}} L_{g,A^{(2)},q^{(2)}} (m_2-u_1)=&e^{\frac{\varphi}{h}} (L_{g,A^{(1)},q^{(1)}}-L_{g,A^{(2)},q^{(2)}}) u_1\\
=&e^{\frac{\varphi}{h}}\big( id^*(A^{(1)}-A^{(2)})u_1 - 2i\langle A^{(1)}-A^{(2)}, du_1 \rangle_g \\
&+( \langle A^{(1)}, A^{(1)} \rangle_g -\langle A^{(2)}, A^{(2)} \rangle_g+ q^{(1)}-q^{(2)})u_1 \big).
\end{aligned}
\end{equation}
Using $A^{(j)}\in C^{0,\varepsilon}(M,T^*M)$, $d^*A^{(j)}=0$, $q^{(j)}\in C(M,\C)$, $j=1,2$, we bound the first summand as follows: 
\begin{equation}\label{eq_rhsfirst_5}
\begin{aligned}
&\|e^{\frac{\varphi}{h}} L_{g,A^{(2)},q^{(2)}} (m_2-u_1)\|_{L^2(M)}\\
&\le \| e^{\frac{\varphi}{h}}2\langle A^{(1)}-A^{(2)}, du_1 \rangle_g \|_{L^2(M)}+\| e^{\frac{\varphi}{h}}( \langle A^{(1)}, A^{(1)} \rangle_g -\langle A^{(2)}, A^{(2)} \rangle_g+ q^{(1)}-q^{(2)})u_1  \|_{L^2(M)}\\
&\le  \cO(h^{-1})\|u_1\|_{H^1_{scl}(M^{int})}
=\cO(h^{-1}).
\end{aligned}
\end{equation}
In the last inequality, we used the fact that $\|u_1\|_{H^1_{scl}(M^{int})} = \cO(1)$, which is true by \eqref{eq_CGO_1}, \eqref{eq_CGO_2} and $\|r_1\|_{H^1_{scl}(M^{int})}=o(1), h\to 0$.

To bound the second term in the last inequality in \eqref{eq_rhsfirst_1}, we need the following semiclassical Sobolev trace estimate, see \cite[Chapter 6]{sjostrand_interior_boundary_semiclassical}:
\begin{equation}
\label{eq_trace}
\|v\|_{H^{1/2}_{scl}(\p M)}\le \cO (h^{-\frac{1}{2}})\|v\|_{H^{1}_{scl}(M^{int})}, \quad v\in H^1(M^{int}).
\end{equation} Using \eqref{eq_trace} together with $\|r_j\|_{H^1_{scl}(M^{int})}= o(1), j = 1,2 $, we obtain
\begin{equation}
\label{eq_reminder_trace}
\|r_j\|_{L^2(\p M)}\le \|r_j\|_{H^{1/2}_{scl}(\p M)} = o(h^{-\frac{1}{2}}),\quad h\to 0, \quad j = 1,2.
\end{equation}

To obtain the bounds 
\begin{equation}
\label{eq_quasimode}
\|v_s\|_{L^2(\p M\setminus \widetilde B)} = \mathcal{O}(1), \quad 
\|w_s\|_{L^2(\p M\setminus \widetilde B)} = \mathcal{O}(1),\quad h \to 0,
\end{equation}
we follow the same idea in the proof of \cite[Theorem 6.2]{Cekic} by noticing that we are taking $L^2$ norm over $\p M\setminus \widetilde B$ (and not over $\p M_-$). 
The fact that $\p M\setminus \widetilde B$ is a compact manifold with boundary of dimension $n-1$ and a projection argument are used here, see page $1826$ in \cite{Cekic}.

Combining the estimates\eqref{eq_rhsfirst_3}, \eqref{eq_rhsfirst_5}, \eqref{eq_reminder_trace}, and \eqref{eq_quasimode}, we obtain from \eqref{eq_rhsfirst_1} that 
\[
\left|\int_{\p M \setminus \Gamma}\p_\nu(m_2-u_1) \overline{u_2}dS_g\right| = o(h^{-1}), \quad h\to 0.
\]
This completes the proof of \eqref{eq_rhsfirst}.
\end{proof}


Noting that $\|u_j\|_{H^1_{scl}(M^{int})} = \mathcal{O}(1)$, $j= 1,2$, we have by the Cauchy-Schwartz inequality that
\[
\left|\int_M\left(\langle A^{(1)}, A^{(1)}\rangle_g-\langle A^{(2)}, A^{(2)}\rangle_g +q_1-q_2\right)u_1u_2dV_g\right|
= \cO(1),\quad h\to 0.
\]

The above estimate together with Propositions \ref{prop_rhsfirst} and $A^{(1)}|_{\p M} = A^{(2)}|_{\p M}$ implies from \eqref{eq_integral_identity_1} that, 
\begin{equation}\label{eq_lhs}
h\int_M \left\langle A^{(1)}-A^{(2)}, u_1d\overline{u_2}-\overline{u_2}du_1\right\rangle_g dV_g  = o(1),  \quad h\to 0.
\end{equation}

Estimate \eqref{eq_lhs} gives us exactly the same identity for $A^{(1)}-A^{(2)}$ as that in \cite[Section 7]{KK_2018}. Under the assumption that $A^{(1)}|_{\p M}=A^{(2)}|_{\p M}$, we may extend $\widetilde{A}:=A^{(1)} - A^{(2)} = 0$ by zero to the complement of $M$ in $\R\times M_0^{int}$, so that the extension $\widetilde{A}$ is continuous. 
Proceeding as in \cite[Section 7]{KK_2018} from \cite[Equation 7.1]{KK_2018} to \cite[Equation 7.9]{KK_2018} with the help of the concentrating property \eqref{eq_CGO_5}, we conclude from \eqref{eq_lhs} that 
\begin{equation}\label{eq_proof_1}
\int_0^L[f(\lambda, \gamma(t))-i\alpha(\lambda,\dot{\gamma}(t))]e^{-\lambda t}dt=0, 
\end{equation}
along any unit speed nontangential geodesic $\gamma:[0,L]\to M_0$ on $M_0$ and any $\lambda\in \R$. Here $f(\lambda,\cdot)\in C(M_0)$, $\alpha(\lambda, \cdot)\in C(M_0, T^*M)$ are as follows:
\begin{equation}\label{eq_proof_2}
\begin{aligned}
&f(\lambda,x')=\int_\R e^{-i\lambda x_1} \widetilde{A}_1 (x_1,x')dx_1,\quad x'\in M_0, \\
&\alpha(\lambda,x')=\sum_{j=2}^n \bigg(\int_\R e^{-i\lambda x_1 } \tilde{A}_j(x_1,x')dx_1 \bigg)dx_j.
\end{aligned}
\end{equation}

Arguing as in \cite[Section 7]{KK_2018}, see also \cite[Section 4]{Yan_2020}, \cite{Cekic}, we differentiate $f$ and $\alpha$ with respect to $\lambda$, and use the injectivity of the geodesic X-ray transform on functions and $1$-forms to conclude from \eqref{eq_proof_1} that there exist $p_l\in C^1(M_0)$, $p_l|_{\p M_0}=0$, such that 
\begin{equation}\label{eq_proof_3}
\p_\lambda^l f(0,x')+lp_{l-1}(x')=0, \quad \p_\lambda^l\alpha(0,x')=idp_l(x'),\quad l=0,1,2,\dots. 
\end{equation}

To proceed, we shall follow \cite[Section 4]{Yan_2020}, \cite[Section 5]{LO_2019}. Let 
\begin{equation}
\label{eq_proof_4}
\phi(x_1,x')=\int_{-a}^{x_1} \widetilde{A}_1(y_1,x')dy_1,
\end{equation}
where $\supp(\tilde{A}(\cdot,x'))\subset (-a,a)$.  It follows from \eqref{eq_proof_3}, \eqref{eq_proof_2} that 
\[
0=f(0,x')=\int_\R \widetilde A_1(y_1,x')dy_1,
\]
and therefore, $\phi$ has compact support in $x_1$. 

Thus, the Fourier transform of $\phi$ with respect to $x_1$, which we denote by $\widehat \phi(\lambda, x')$, is real analytic with respect to $\lambda$, and therefore, we have 
\begin{equation}
\label{eq_proof_5}
\widehat \phi(\lambda,x')=\sum_{k=0}^\infty \frac{\phi_k(x')}{k!}\lambda^k,
\end{equation}
where $\phi_k(x')=(\p_\lambda^k\widehat \phi)(0,x')$.  

It follows from \eqref{eq_proof_4} that 
\begin{equation}
\label{eq_proof_6}
\p_{x_1}\phi(x_1,x')=\widetilde A_1(x_1,x'),
\end{equation}
and therefore, taking the Fourier transform with respect to $x_1$, and using \eqref{eq_proof_2}, we obtain
\begin{equation}
\label{eq_proof_7}
i\lambda \widehat \phi(\lambda,x')=f(\lambda,x').
\end{equation}
Differentiating \eqref{eq_proof_7} $(l+1)$-times in $\lambda$, letting $\lambda=0$, and using \eqref{eq_proof_3},  we get 
\begin{equation}
\label{eq_proof_8}
\p_\lambda^l\widehat \phi(0,x')=ip_l(x'), \quad l=0,1,2,\dots.
\end{equation}

Substituting \eqref{eq_proof_8} into \eqref{eq_proof_5}, we obtain that 
\[
\widehat \phi(\lambda,x')=\sum_{k=0}^\infty \frac{ip_l(x')}{k!}\lambda^k,
\]
and taking the differential in $x'$ in the sense of distributions, and using \eqref{eq_proof_3}, \eqref{eq_proof_2}, we see that 
\begin{equation}
\label{eq_proof_9}
d_{x'}\widehat \phi(\lambda,x')=\sum_{k=0}^\infty \frac{i d p_l(x')}{k!}\lambda^k=\sum_{k=0}^\infty \frac{\p_{\lambda}^k\alpha(0,x')}{k!}\lambda^k=\alpha(\lambda,x')=\sum_{j=2}^n \widehat{\widetilde A_j}(\lambda,x')dx_j.
\end{equation}
Taking the inverse Fourier transform $\lambda\mapsto x_1$ in \eqref{eq_proof_9}, we get 
\begin{equation}
\label{eq_proof_10}
d_{x'} \phi(x_1,x')=\sum_{j=2}^n \widetilde A_j(x_1,x')dx_j.
\end{equation}

We also have from \eqref{eq_proof_6} that 
\begin{equation}
\label{eq_proof_11}
d_{x_1} \phi(x_1,x')=\widetilde A_1(x_1,x')dx_1.
\end{equation}

It follows from \eqref{eq_proof_11} and \eqref{eq_proof_10} that 
\begin{equation}
\label{eq_proof_12}
d\phi=\widetilde A.
\end{equation}
Since $\p M$ is connected and $d\phi|_{\p M}=\widetilde A|_{\p M} =0$, $\phi$ is a constant near $\p M$. Modifying $\phi$ by a constant, we may assume that $\phi=0$ on $\p M$.

By the natural obstruction \cite[Lemma 4.1]{KK_2018} and $\phi|_{\p M}=0$, we have $C_{g,A^{(2)},q^{(2)}}=C_{g,A^{(2)}+d\phi,q^{(2)}}$, and therefore
\begin{equation}\label{eq_proof_potential_1}
C^\Gamma_{g,A^{(2)}+d\phi,q^{(2)}}=C^\Gamma_{g,A^{(1)},q^{(2)}}.
\end{equation}
Then we may assume that $A^{(1)}=A^{(2)}$ and we will denote this 1-form by $A$. The integral identity \eqref{eq_integral_identity_1} now becomes 
\begin{equation}\label{eq_proof_potential_2}
\int_M(q^{(1)}-q^{(2)})u_1\overline{u_2}dV_g=-\int_{\p M \setminus \Gamma}\p_\nu(m_2-u_1) \overline{u_2}dS_g,
\end{equation}
for any $u_1, u_2, m_2 \in H^1(M^{int})$ described in Proposition \ref{prop_integral_identity} with $A^{(1)}=A^{(2)}=A$. 

We shall test the integral identity \eqref{eq_proof_potential_2} against complex geometric optics solutions to recover the electric potential. 

\begin{prop} \label{prop_electric_potential}
Let $u_1,u_2,m_2$ be functions as described above. Then we have 
\begin{equation}\label{eq_electric_potential_1}
-\int_{\p M \setminus \Gamma}\p_\nu(m_2-u_1) \overline{u_2}dS_g =o(1), \quad h\to 0.
\end{equation}
\end{prop}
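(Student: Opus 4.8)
The plan is to repeat the argument of Proposition~\ref{prop_rhsfirst} almost verbatim, exploiting the one decisive simplification afforded by the gauge $A^{(1)}=A^{(2)}=A$: the operator difference applied to $u_1$ now contains no derivative of $u_1$, which buys exactly the one extra power of $h$ that lets us bound the integral itself, rather than $h$ times the integral, by $o(1)$.

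First I would reduce the boundary integral exactly as in \eqref{eq_rhsfirst_1}, reusing the set $\widetilde B$ and the constant $\varepsilon_0$ from the proof of Proposition~\ref{prop_rhsfirst}. Inserting the CGO form \eqref{eq_CGO_1} of $u_2$, using $|e^{\overline s x_1}|=e^{\varphi/h}$, and applying the Cauchy--Schwarz inequality on $\p M\setminus\widetilde B$ together with $1\le(-\p_\nu\varphi)/\varepsilon_0$ there, gives
\[
\LV\int_{\p M \setminus \Gamma}\p_\nu(m_2-u_1) \overline{u_2}\,dS_g\RV \le \cO(1)\,\| \sqrt{-\p_\nu \varphi}\, e^{\varphi/h}\p_\nu(m_2-u_1) \|_{L^2(\p M_-)}\big(\|w_s\|_{L^2(\p M \setminus \widetilde B)}+\|r_2\|_{L^2(\p M \setminus \widetilde B)}\big).
\]

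Next I apply the boundary Carleman estimate \eqref{eq_boundary_carleman_est} to $u=m_2-u_1$, $A=A^{(2)}=A$, $q=q^{(2)}$, as in \eqref{eq_rhsfirst_3}; the $\p M_+$ term again vanishes since $\p M_+=B\subset\Gamma$ and the Neumann data of $m_2$ and $u_1$ coincide on $\Gamma$ by \eqref{eq_integral_identity_4} (here using $A^{(1)}=A^{(2)}$). The key step is the source estimate: with $A^{(1)}=A^{(2)}$, identity \eqref{eq_rhsfirst_4} collapses to
\[
e^{\varphi/h}L_{g,A,q^{(2)}}(m_2-u_1)=e^{\varphi/h}(q^{(1)}-q^{(2)})u_1,
\]
which involves no derivative of $u_1$. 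Since $|e^{\varphi/h}u_1|=|e^{-i\lambda x_1}c^{-\frac{n-2}{4}}(v_s+r_1)|$ and $q^{(j)}\in C(M,\C)$, this yields $\|e^{\varphi/h}L_{g,A,q^{(2)}}(m_2-u_1)\|_{L^2(M)}\le\cO(1)\|u_1\|_{H^1_{scl}(M^{int})}=\cO(1)$, i.e.\ one full power of $h$ better than the $\cO(h^{-1})$ of \eqref{eq_rhsfirst_5}. Hence \eqref{eq_rhsfirst_3} gives
\[
\| \sqrt{-\p_\nu \varphi}\, e^{\varphi/h}\p_\nu(m_2-u_1) \|_{L^2(\p M_-)}=\cO(h^{1/2}).
\]

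Finally I combine this with the two boundary bounds already available: \eqref{eq_quasimode} gives $\|w_s\|_{L^2(\p M\setminus\widetilde B)}=\cO(1)$ and \eqref{eq_reminder_trace} gives $\|r_2\|_{L^2(\p M\setminus\widetilde B)}=o(h^{-1/2})$, so the parenthetical factor is $o(h^{-1/2})$. Multiplying, $\cO(h^{1/2})\cdot o(h^{-1/2})=o(1)$, which is \eqref{eq_electric_potential_1}. The only delicate point is the bookkeeping of the powers of $h$: it is precisely the vanishing of the term $\langle A^{(1)}-A^{(2)},du_1\rangle_g$ that removes the derivative, saves one power of $h$, and thereby upgrades the $o(1)$ conclusion from being about $h$ times the integral (as in Proposition~\ref{prop_rhsfirst}) to being about the integral itself.
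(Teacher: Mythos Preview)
Your proposal is correct and follows essentially the same route as the paper: reduce the boundary integral via Cauchy--Schwarz as in \eqref{eq_rhsfirst_1}, apply the boundary Carleman estimate as in \eqref{eq_rhsfirst_3}, observe that with $A^{(1)}=A^{(2)}$ the source term collapses to $(q^{(1)}-q^{(2)})u_1$ so that its weighted $L^2$ norm is $\cO(1)$ rather than $\cO(h^{-1})$, and combine with \eqref{eq_quasimode} and \eqref{eq_reminder_trace}. Your explicit remark that the missing derivative term is what saves the extra power of $h$ is exactly the point.
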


\begin{proof}
In the same way as \eqref{eq_rhsfirst_1} and \eqref{eq_rhsfirst_3}, we get by boundary Carleman estimates that
\begin{equation}\label{eq_electric_potential_2}
\begin{aligned}
\left|\int_{\p M \setminus \Gamma}\p_\nu(m_2-u_1) \overline{u_2}dS_g\right| \le
&
\cO(\sqrt h) \|e^{\frac{\varphi}{h}} L_{g,A,q^{(2)}} (m_2-u_1)\|_{L^2(M)}
\left(\|w_s\|_{L^2(\p M\setminus \widetilde B)}+\|r_2\|_{L^2(\p M\setminus \widetilde B)}\right)
\end{aligned}
\end{equation}

Note that using \eqref{eq_integral_identity_2} and \eqref{eq_integral_identity_3}, we have
\begin{equation}\label{eq_electric_potential_3}
\begin{aligned}
L_{g,A,q^{(2)}}(m_2- u_1) &= L_{g,A,q^{(1)}} u_1 - L_{g,A,q^{(2)}} u_1 
= (q^{(1)}-q^{(2)})u_1.
\end{aligned}
\end{equation}
Thus, we get by \eqref{eq_electric_potential_3} that
\begin{equation}\label{eq_electric_potential_4}
\begin{aligned}
\|e^{\frac{\varphi}{h}} L_{g,A,q^{(2)}} (m_2-u_1)\|_{L^2(M)}\le
\mathcal{O}(1)\|q^{(2)}-q^{(1)}\|_{L^\infty(M)}\|u_1\|_{L^2(M)}.
\end{aligned}
\end{equation}
Estimate \eqref{eq_electric_potential_2}, together with \eqref{eq_electric_potential_4}, \eqref{eq_reminder_trace}, and \eqref{eq_quasimode} proves \eqref{eq_electric_potential_1}.
\end{proof}
Now combining \eqref{eq_proof_potential_2} with \eqref{eq_electric_potential_1}, we get 
\begin{equation}\label{eq_electric_potential_7}
    \int_M \left(q^{(1)}-q^{(2)}\right)u_1\overline{u_2}dV_g = o(1), \quad h \to 0.
\end{equation}
Using \eqref{eq_CGO_1}, \eqref{eq_CGO_2}, $\|r_j\|_{H^1_{scl}(M^{int})}=o(1)$, as $h\to 0$, and the Cauchy-Schwartz inequality, we obtain from \eqref{eq_electric_potential_7} that 
\begin{equation}\label{eq_electric_potential_8}
\int_M \left(q^{(1)}-q^{(2)}\right)e^{-2i\lambda x_1} c^{-\frac{n-2}{2}}v_s\overline{w_s}dV_g = o(1), \quad h \to 0.
\end{equation}

Under the assumption that $q^{(1)}|_{\p M}=q^{(2)}|_{\p M}$, we may extend $\widetilde{q}=q^{(1)} -q^{(2)} = 0$ by zero to the complement of $M$ in $\R\times M_0^{int}$, so that the extension $\widetilde{q}$ is continuous. 
Letting $h\to 0$, taking $\psi = \widetilde{q}c^{-\frac{n-2}{2}}$ in \eqref{eq_CGO_5}, and noting that $dV= c^{\frac{n}{2}}dV_{g_0}dt$, we have 
\begin{equation}\label{eq_electric_potential_9}
\int_\R\int_0^L e^{-2i\lambda x_1} e^{-2\lambda t} \eta(x_1,t)e^{\Phi^{(1)}(x'_1,t)+\overline{\Phi^{(2)} (x'_1,t)}} (\widetilde{q} c)(x_1,\gamma(t)) dtdx_1=0.
\end{equation}
We can take $\Phi^{(1)} = - \overline{\Phi^{(2)}}$ since $A =A^{(1)} = A^{(2)}$. Let us also take $\eta = 1$. Replacing $2\lambda$ by $
\lambda$, now \eqref{eq_electric_potential_9} reduces to 
\begin{equation}
\label{eq_electric_potential_10}
\int_{0}^{L}e^{-\lambda t} (\widehat{\widetilde{q} c})(\lambda,\gamma(t))dt = 0,
\end{equation}
for any $\lambda\in \R$ and any nontangential geodesic $\gamma$ in $M_0$, where 
\[(\widehat{\widetilde{q} c})(\lambda,\gamma(t)) = \int_{\R}e^{-i\lambda x_1}(\tilde{q} c)(x_1,\gamma(t))dx_1
\]
is analytic in $\lambda$ since it is the Fourier transform of $\widetilde{q} c$ in $x_1$ and $\supp{\widetilde{q} c}$ is compact.

Repeating similar arguments leading from \eqref{eq_proof_1} to \eqref{eq_proof_3} for $f(\lambda, x') = \widehat{\tilde{q} c}(\lambda,x')$ and \\ $\alpha(\lambda,x')=0$, we obtain 
\[
\p_\lambda^l (\widehat{\tilde{q} c})(0,\gamma(t))=0, \quad l=0,1,2,\dots. 
\]
By analyticity, we have $\widehat{\widetilde{q} c}=0$. Then using the injectivity of the Fourier transform, we recover $q^{(1)}=q^{(2)}$.

\section*{Acknowledgments}
The authors wish to express their sincere thanks to Hamid Hezari for the stimulating questions during Salem Selim's advancement, which motivated this paper. The authors gratefully acknowledge the many helpful suggestions of Katya Krupchyk during the preparation of this paper. The research of S.S. is partially supported by the National Science Foundation (DMS 2109199).

\bibliography{MagneticSchrodinger_partial_CTA.bib}

\begin{thebibliography}{10}

\bibitem{Bhattacharyya_2018}
S.~Bhattacharyya.
\newblock An inverse problem for the magnetic {S}chr\"odinger operator on {R}iemannian manifolds from partial boundary data.
\newblock {\em Inverse Probl. Imaging}, 12(3):801--830, 2018.

\bibitem{Cekic}
M.~Ceki{\'c}.
\newblock The {C}alder{\'o}n problem for connections.
\newblock {\em Commun. Partial. Differ. Equ.}, 42(11):1781--1836, 2017.

\bibitem{j_chung_partial_2014}
F.~J. Chung.
\newblock Partial data for the {Neumann}-{Dirichlet} magnetic {Schrödinger} inverse problem.
\newblock {\em Inverse Probl. Imaging}, 8(4):959--989, 2014.

\bibitem{chung_partial_2014}
F.~J. Chung.
\newblock A partial data result for the magnetic {Schr\"odinger} inverse problem.
\newblock {\em Anal. PDE}, 7(1):117--157, May 2014.

\bibitem{DKSalo_2013}
D.~Dos Santos~Ferreira, C.~Kenig, and M.~Salo.
\newblock Determining an unbounded potential from {C}auchy data in admissible geometries.
\newblock {\em Commun. Partial. Differ. Equ.}, 38(1):50--68, 2013.

\bibitem{DKSaloU_2009}
D.~Dos Santos~Ferreira, C.~Kenig, M.~Salo, and G.~Uhlmann.
\newblock Limiting {C}arleman weights and anisotropic inverse problems.
\newblock {\em Invent. Math.}, 178(1):119--171, 2009.

\bibitem{DKSU_2007}
D.~Dos Santos~Ferreira, C.~Kenig, J.~Sj{\"o}strand, and G.~Uhlmann.
\newblock Determining a magnetic {S}chr{\"o}dinger operator from partial {C}auchy data.
\newblock {\em Commun. Math. Phys.}, 271(2):467--488, 2007.

\bibitem{DKuLS_2016}
D.~Dos Santos~Ferreira, Y.~Kurylev, M.~Lassas, and M.~Salo.
\newblock The {C}alder{\'o}n problem in transversally anisotropic geometries.
\newblock {\em J. Eur. Math. Soc.}, 18(11):2579--2626, 2016.

\bibitem{LO_2019}
A.~Feizmohammadi, J.~Ilmavirta, Y.~Kian, and L.~Oksanen.
\newblock Recovery of time dependent coefficients from boundary data for hyperbolic equations.
\newblock {\em J. Spectr. Theory}, 11:1107--1143, 2021.

\bibitem{gilbarg1977elliptic}
D.~Gilbarg and N.~Trudinger.
\newblock {\em Elliptic partial differential equations of second order}.
\newblock revised third ed., Springer-Verlag, Berlin, 2001.

\bibitem{Guillarmou_2017}
C.~Guillarmou.
\newblock Lens rigidity for manifolds with hyperbolic trapped sets.
\newblock {\em J. Amer. Math. Soc.}, 30(2):561--599, 2017.

\bibitem{haberman2018unique}
B.~Haberman.
\newblock Unique determination of a magnetic {S}chr{\"o}dinger operator with unbounded magnetic potential from boundary data.
\newblock {\em Int. Math. Res. Not.}, 2018(4):1080--1128, 2018.

\bibitem{Kenig_Salo_APDE_2013}
C.~Kenig and M.~Salo.
\newblock The {C}alder{\'o}n problem with partial data on manifolds and applications.
\newblock {\em Anal. PDE}, 6(8):2003--2048, 2013.

\bibitem{kenig2014recent}
C.~Kenig and M.~Salo.
\newblock Recent progress in the {C}alder{\'o}n problem with partial data.
\newblock {\em Contemp. Math.}, 615:193--222, 2014.

\bibitem{kennig_sjostrand_uhlmann_partial_data}
C.~E. Kenig, J.~Sj\"{o}strand, and G.~Uhlmann.
\newblock The {C}alder\'{o}n problem with partial data.
\newblock {\em Ann. of Math.}, 165(2):567--591, 2007.

\bibitem{knudsen2006determining}
K.~Knudsen and M.~Salo.
\newblock Determining nonsmooth first order terms from partial boundary measurements.
\newblock {\em Inverse Probl. Imaging}, 1(2):349--369, 2007.

\bibitem{Krup_Uhlmann_magnet}
K.~Krupchyk and G.~Uhlmann.
\newblock Uniqueness in an inverse boundary problem for a magnetic {S}chr{\"o}dinger operator with a bounded magnetic potential.
\newblock {\em Commun. Math. Phys.}, 327(3):993--1009, 2014.

\bibitem{KK_advection_2018}
K.~Krupchyk and G.~Uhlmann.
\newblock Inverse problems for advection diffusion equations in admissible geometries.
\newblock {\em Commun. Partial. Differ. Equ.}, 43(4):585--615, 2018.

\bibitem{KK_2018}
K.~Krupchyk and G.~Uhlmann.
\newblock Inverse problems for magnetic {S}chr{\"o}dinger operators in transversally anisotropic geometries.
\newblock {\em Commun. Math. Phys.}, 361(2):525--582, 2018.

\bibitem{NakSunUlm_1995}
G.~Nakamura, Z.~Sun, and G.~Uhlmann.
\newblock Global identifiability for an inverse problem for the {S}chr{\"o}dinger equation in a magnetic field.
\newblock {\em Math. Ann.}, 303:377--388, 1995.

\bibitem{panchenko2002inverse}
A.~Panchenko.
\newblock An inverse problem for the magnetic {S}chr{\"o}dinger equation and quasi-exponential solutions of nonsmooth partial differential equations.
\newblock {\em Inverse Problems}, 18(5):1421--1434, 2002.

\bibitem{salo2004inverse}
M.~Salo.
\newblock {\em Inverse problems for nonsmooth first order perturbations of the Laplacian}, volume 139.
\newblock Ann. Acad. Sci. Fenn. Math. Diss., 2004.

\bibitem{selim_partial_2022}
S.~Selim.
\newblock Partial data inverse problems for magnetic {S}chr\"odinger operators with potentials of low regularity.
\newblock {\em arXiv preprint arXiv:2210.06595}, 2022.

\bibitem{sjostrand_interior_boundary_semiclassical}
J.~Sj{\"o}strand.
\newblock {\em Weyl law for semi-classical resonances with randomly perturbed potentials}.
\newblock M{\'e}m. Soc. Math. Fr. (N. S.), 2014.

\bibitem{Sun_1993}
Z.~Sun.
\newblock An inverse boundary value problem for {S}chr{\"o}dinger operators with vector potentials.
\newblock {\em Trans. Amer. Math. Soc.}, 338(2):953--969, 1993.

\bibitem{Sylvester_Uhlmann_1987}
J.~Sylvester and G.~Uhlmann.
\newblock A global uniqueness theorem for an inverse boundary value problem.
\newblock {\em Ann. of Math.}, 125(1):153--169, 1987.

\bibitem{tolmasky1998exponentially}
C.~Tolmasky.
\newblock Exponentially growing solutions for nonsmooth first-order perturbations of the {L}aplacian.
\newblock {\em SIAM J. Math. Anal.}, 29(1):116--133, 1998.

\bibitem{uhlmann2014inverse}
G.~Uhlmann.
\newblock Inverse problems: seeing the unseen.
\newblock {\em Bull. Math. Sci.}, 4(2):209--279, 2014.

\bibitem{Yan_2020}
L.~Yan.
\newblock Inverse boundary problems for biharmonic operators in transversally anisotropic geometries.
\newblock {\em SIAM J. Math. Anal.}, 53(6):6617--6653, 2021.

\end{thebibliography}
\bibliographystyle{plain}

\end{document}